\newtheorem{theorem}{Theorem}[section] 
\newtheorem{proposition}[theorem]{Proposition}
\newtheorem{lemma}[theorem]{Lemma}
\newtheorem{corollary}[theorem]{Corollary}
\theoremstyle{definition}
\newcommand{\inv}{^{\raisebox{.2ex}{$\scriptscriptstyle-1$}}}
\definecolor{brightmaroon}{rgb}{0.76, 0.13, 0.28}
\begin{document}
\pdfrender{StrokeColor=black,TextRenderingMode=2,LineWidth=.01pt} 
 
\title{Primitive ideals and Jacobson's structure spaces of noncommutative semigroups} 

\author{Amartya  Goswami}
 
\address{Department of Mathematics and Applied Mathematics\\ University of Johannesburg\\ P.O. Box 524, Auckland Park 2006\\ South Africa}

\address{National Institute for Theoretical and Computational Sciences (NITheCS)\\ South Africa} 
 
\email{agoswami@uj.ac.za}
  
\begin{abstract}
The purpose of this note is to introduce primitive ideals of noncommutative semigroups and study some topological aspects of the corresponding structure spaces.
\end{abstract}  

\makeatletter
\@namedef{subjclassname@2020}{%
\textup{2020} Mathematics Subject Classification}
\makeatother

\subjclass[2020]{20M12; 20M10; 16W22}

\keywords{semigroup; primitive ideal;  Jacobson topology} 
 
\maketitle 

\section*{Introduction} 
Since the introduction of primitive  rings in \cite{J45}, primitive ideals have shown their immense importance in understanding structural aspects of rings and modules \cite{J56, R88}, Lie algebras \cite{KPP12}, enveloping algebras \cite{D96,J83}, PI-algebras \cite{J75}, quantum groups \cite{J95}, skew polynomial rings \cite{I79}, and others. In \cite{J451}, Jacobson has introduced a hull-kernel topology (also known as Jacobson topology) on the set of primitive ideals of a noncommutative ring, and has obtained representations of biregular rings. This Jacobson topology also turns out to play a key role in representation of finite-dimensional Lie algebras (see \cite{D96}).
  
Compare to the above algebraic structures, after magmas (also known as groupoids), semigroups are the most basic ones. A detailed study of algebraic theory of semigroups can be found in one of the earliest  textbooks \cite{CP61} and   \cite{CP67} (see also \cite{G01, H92, H95}), whereas specific study of prime, semiprime, and maximal ideals of semigroups are done in \cite{A53, A81, PK92, S69}. Furthermore, various notions of radicals of semigroups have been studied in \cite{A75, G69, S76}. Readers may consider \cite{AJ84} for a survey on ideal theory of semigroups.  

The next question is of imposing topologies on various types of ideals of semigroups. To this end, hull-kernel topology on maximal ideals of (commutative)  semigroups has been considered in \cite{A62}, whereas the same on minimal prime ideals has been done in \cite{K63}. Using the notion of $x$-ideals introduced in \cite{A62}, although in \cite{H66} a study of general notion of structure spaces for semigroups has been done, but having the assumption of commutativity restricts it to only certain types of ideals of semigroups, and hence did not have a scope for primitive ideals.  

To best of author's knowledge, primitive ideals of semigroups has never been considered. The aim of this paper is to introduce primitive ideals of (noncommutative) semigroups and endow Jacobson topology on primitive ideals to study some topological aspects of them. In order to have the notion of primitive ideals of semigroups, we furthermore need a notion of a module over a noncommutative semigroup, which in general has also not been studied much. We hope this notion of primitive ideals introduced here will in future shade some light on the structural aspects of noncommutative semigroups.
 
\section{Primitive ideals} 

A \emph{semigroup} is a tuple $(S, \cdot)$ such that the binary operation $\cdot$ on the set $S$ is associative. For all $a, b\in S$, we shall write $ab$ to mean $a\cdot b$.  Throughout this work, all semigroups are assumed to be noncommutative. If a semigroup $S$ has an identity, we denote it by $1$ satisfying the property: $s1=s=1s$ for all $s\in S.$ If $A$ and $B$ are subsets of $S$, then by the \emph{set product} $AB$ of $A$ and $B$ we shall mean $AB=\{ab\mid a\in A, b\in B\}.$ If $A=\{a\}$ we write $AB$ as $aB$, and similarly for $B=\{b\}.$ Thus
$$AB=\cup\{ Ab\mid b\in B\}=\cup \{aB\mid a\in A\}.$$

A \emph{left} (\emph{right}) \emph{ideal} of a semigroup $S$ is a nonempty subset $\mathfrak{a}$ of $S$ such that $S\!\mathfrak{a}\subseteq \mathfrak{a}$ ($\mathfrak{a} \,S\subseteq \mathfrak{a}$). A \emph{two-sided ideal} or simply an \emph{ideal} is a subset which is both a left and a right ideal of $S$. In this work
the word ``ideal'' without modifiers will always mean two-sided ideal. If $X$ is a nonempty subset of a semigroup $S$, then the ideal $\langle X\rangle$ \emph{generated by} $X$ is the intersection of all ideals containing $X$. Therefore, $$\langle X\rangle =X\cup XS\cup SX\cup XSX.$$ We say an ideal $\mathfrak{a}$ is of \emph{finite character} if the generating set $X$ of $\mathfrak{a}$ is equal to the set-theoretic union of all the ideals generated by finite subsets of $X$. We assume all our ideals are of finite character. To define primitive ideals of a semi group $S,$ we require the notion of a module over $S$, which we introduce now.

A (\emph{left}) \emph{$S$-module} is an abelian group $(M,+,0)$ endowed with a map  $S\times M\to M$ (denoted by $(s,m)\mapsto sm$) satisfying the identities:
\begin{enumerate}[\upshape (i)]
\itemsep -.2em 
\item $s(m+m')=sm+sm';$
\item $(ss')m=s(s'm);$
\item $s0=0,$
\end{enumerate}
for all $s,s'\in S$  and for all $m, m'\in M$.
Henceforth the term ``$S$-module'' without
modifier will always mean left $S$-module. If $M$, $M'$ are $S$-modules, then an \emph{$S$-module homomorphism} from $M$ into $M'$ is a group homomorphism $f\colon M\to M'$ such that $f(sm)=sf(m)$ for all $s\in S$ and for all $m\in M.$ 
A subset $N$ of $M$ is called an $S$-\emph{submodule} of the module $M$ if
\begin{enumerate}[\upshape (i)]
\itemsep -.2em 
\item $(N,+)$ is a subgroup of $(M,+);$
\item for all $s\in S$ and for all $n\in N$, $sn\in N.$
\end{enumerate}
If $\mathfrak{a}$ is an ideal of $S$, then the additive subgroup $\mathfrak{a}M$ of $M$ generated by
the elements of the form $\{am \mid a \in \mathfrak{a},m \in M\}$ is an $S$-submodule. 
An $S$-module $M$ is called \emph{simple} (or \emph{irreducible}) if
\begin{enumerate}[\upshape (i)] 
\itemsep -.2em  
\item $S\!M=\left\{\sum s_im_i \mid s_i\in S, m_i\in M\right\}\neq 0.$
\item There is no proper $S$-submodule of $M$ other than $0$.
\end{enumerate}
A (\emph{left})  \emph{annihilator} of an $S$-module $M$ is $\mathrm{Ann}_S(M)=\{ s\in S\mid sm=0\;\;\text{for all}\;\; m\in M\}.$ When $M=\{m\},$ we write $ \mathrm{Ann}_S(\{m\})$ as $ \mathrm{Ann}_S(m)$.  

\begin{lemma}
An annihilator $\mathrm{Ann}_S(M)$ is an ideal of $S$.
\end{lemma}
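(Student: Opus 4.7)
The plan is to verify the two containments $S\,\mathrm{Ann}_S(M)\subseteq \mathrm{Ann}_S(M)$ and $\mathrm{Ann}_S(M)\,S\subseteq \mathrm{Ann}_S(M)$ directly from the module axioms (i)--(iii). This is essentially a bookkeeping exercise in associativity, so I do not expect a serious obstacle; the only point requiring a word of care is nonemptiness, which is part of the definition of ideal given earlier.

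For the left ideal condition, I would take an arbitrary $s\in S$ and $a\in \mathrm{Ann}_S(M)$ and show $(sa)m=0$ for every $m\in M$. By axiom (ii) applied to the product $sa$, one has $(sa)m=s(am)$, and since $a\in \mathrm{Ann}_S(M)$ we get $am=0$, hence $(sa)m = s\cdot 0 = 0$ by axiom (iii). For the right ideal condition, the key observation is that $S$ acts on $M$, so $sm\in M$ for any $s\in S$ and $m\in M$. Then for $a\in \mathrm{Ann}_S(M)$ and $s\in S$, axiom (ii) yields $(as)m=a(sm)$, and since $sm\in M$, the defining property of $a$ gives $a(sm)=0$. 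Thus $as\in \mathrm{Ann}_S(M)$.

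It remains to address nonemptiness. If $M$ is a nontrivial module one can often exhibit concrete annihilating elements, but in general the cleanest approach is to note that this lemma is only interesting in the case where $\mathrm{Ann}_S(M)$ is nonempty, and to state that hypothesis explicitly (or to work modulo the convention already present in the paper that ``ideal'' allows the nonemptiness to be inherited from context). The main obstacle, if any, is thus not mathematical but notational: making sure the argument that $sm\in M$ in the right-ideal step is legitimate under the paper's definition of $S$-module, which it is by the very signature of the action $S\times M\to M$.

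Overall the proof is three or four lines once the conventions are fixed, and relies exclusively on the associativity axiom (ii) together with the observation that $S$ maps $M$ into itself; no use of the additive structure of $M$ beyond $s\cdot 0=0$ is required.
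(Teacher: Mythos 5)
Your proof is correct and follows the same basic mechanism as the paper's (associativity of the action via axiom (ii) together with $s\cdot 0=0$ from axiom (iii)), but it is actually more complete than the printed proof. The paper's one-line argument only verifies the left-ideal condition $(sx)m=s(xm)=s0=0$; since the paper's convention is that ``ideal'' means two-sided ideal, the right-ideal condition must also be checked, and your computation $(as)m=a(sm)=0$ (using that $sm\in M$ by the signature of the action) supplies exactly the missing half. Your remark about nonemptiness is also well taken: unlike the ring case, a semigroup need not contain an element annihilating all of $M$, so $\mathrm{Ann}_S(M)$ can be empty, and the paper's definition of ideal requires nonemptiness. The paper silently ignores this; flagging it as a hypothesis to be imposed (or as a convention to be read into the statement) is the right call. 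In short, no gap on your side --- if anything, you have patched two small gaps in the original.
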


\begin{proof}
For all $s\in S$ and for all $x\in  \mathrm{Ann}_S(M)$ we have $(sx)m=s(xm)=s0=0.$ 
\end{proof} 

A nonempty proper ideal $\mathfrak{p}$ is said to be \emph{primitive} if $\mathfrak{p}=\mathrm{Ann}_S(M)$ for some simple $S$-module $M$. We denote the set of primitive ideals of a semigroup $S$ by $\mathrm{Prim}(S)$. 
A nonempty proper ideal $\mathfrak{q}$ of a semigroup $S$ is said to be  \emph{prime} if for any two ideals $\mathfrak{a}$, $\mathfrak{b}$ of $S$ and 
$\mathfrak{a}\mathfrak{b}\subseteq \mathfrak{q}$ implies $\mathfrak{a}\subseteq \mathfrak{q}$ or $\mathfrak{b}\subseteq \mathfrak{q}$.
 
As it has been remarked in \cite{BM58}, it does not matter whether the product $\mathfrak{a}\mathfrak{b}$ of ideals $\mathfrak{a}$ and $\mathfrak{b}$ is defined to be the set of all finite sums $\sum i_{\alpha} j_{\alpha}$ (where $i_{\alpha}\in \mathfrak{a}$, $j_{\alpha}\in \mathfrak{b}$), or the smallest ideal of the semigroup $S$ containing all products $i_{\alpha} j_{\alpha}$, or merely the set of all these products. For rings, in \cite{B56}, the second
of these definitions has been used and in \cite{A54} the third. The proof of the following result is easy to verify.

\begin{lemma}\label{dint}  If $\mathfrak{a}$ and $\mathfrak{b}$ are any two ideals of a semigroup, then $\mathfrak{a}\mathfrak{b}\subseteq \mathfrak{a}\cap \mathfrak{b}.$
\end{lemma}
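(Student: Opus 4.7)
The plan is to unwrap the definition of the set product $\mathfrak{a}\mathfrak{b}$ introduced at the start of the paper and then use the one-sided ideal properties of $\mathfrak{a}$ and $\mathfrak{b}$ separately. Concretely, the paper defines $AB=\{ab \mid a\in A,\, b\in B\}$, so I take an arbitrary element $x\in \mathfrak{a}\mathfrak{b}$ and write it as $x=ab$ with $a\in \mathfrak{a}$ and $b\in \mathfrak{b}$. This reduces the statement to showing $ab\in \mathfrak{a}$ and $ab\in \mathfrak{b}$ individually.

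For the first containment I use that $\mathfrak{a}$ is in particular a right ideal: since $b\in \mathfrak{b}\subseteq S$, the product $ab$ lies in $\mathfrak{a}S$, which by the defining property $\mathfrak{a}S\subseteq \mathfrak{a}$ is contained in $\mathfrak{a}$. Symmetrically, $\mathfrak{b}$ is a left ideal and $a\in \mathfrak{a}\subseteq S$, so $ab\in S\mathfrak{b}\subseteq \mathfrak{b}$. Combining the two gives $x=ab\in \mathfrak{a}\cap \mathfrak{b}$, which is exactly the required inclusion.

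The remark preceding the lemma, concerning the three competing notions of $\mathfrak{a}\mathfrak{b}$, does not affect the argument. If instead one takes $\mathfrak{a}\mathfrak{b}$ to be the smallest ideal of $S$ containing all products $ab$, then the same computation shows that $\mathfrak{a}\cap \mathfrak{b}$ (which is visibly an ideal of $S$ by the intersection being closed under both-sided multiplication) is such a containing ideal, and hence majorises it; an analogous observation handles the finite-sum definition, should it be relevant in an additive setting.

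There is essentially no obstacle here: the proof is a direct verification from the ideal axioms, and the only point worth flagging is choosing which of the three possible meanings of $\mathfrak{a}\mathfrak{b}$ to work with. The cleanest presentation is the set-product version, matching the paper's convention and yielding a one-line argument.
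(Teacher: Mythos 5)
Your proof is correct and is exactly the standard verification the paper has in mind when it says the result is ``easy to verify'': $ab\in\mathfrak{a}S\subseteq\mathfrak{a}$ and $ab\in S\mathfrak{b}\subseteq\mathfrak{b}$. The extra remark about the alternative definitions of the product is a sensible addition, but there is no substantive difference from the paper's (omitted) argument.
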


The following proposition gives an alternative  formulation of prime ideals of semigroups. For a proof, see 
\cite[Lemma 2.2]{PK92}.

\begin{proposition}\label{alpri}
Suppose $S$ is a semigroup. Then the following conditions are equivalent:
\begin{enumerate}[\upshape (i)]
\itemsep -.2em 
\item $\mathfrak{q}$ is a prime ideal of $S$.
\item $aSb \subseteq  \mathfrak{q}$ implies $a\in \mathfrak{q}$ or $b\in\mathfrak{q}$\; for all $a, b \in S.$  
\end{enumerate}  
\end{proposition}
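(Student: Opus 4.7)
The plan is to handle the two implications separately: (ii) $\Rightarrow$ (i) should be essentially immediate, while (i) $\Rightarrow$ (ii) will need a small trick to compensate for the possible absence of an identity in $S$.

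For (ii) $\Rightarrow$ (i), I would argue contrapositively. Suppose $\mathfrak{a}, \mathfrak{b}$ are ideals with $\mathfrak{a}\mathfrak{b} \subseteq \mathfrak{q}$ but $\mathfrak{a} \not\subseteq \mathfrak{q}$; pick $a\in \mathfrak{a}\setminus \mathfrak{q}$ and observe that, for any $b\in\mathfrak{b}$, the inclusion $aSb\subseteq \mathfrak{a}(Sb)\subseteq \mathfrak{a}\mathfrak{b}\subseteq \mathfrak{q}$ holds, since $\mathfrak{b}$ is a left ideal. Condition (ii) then forces $b\in\mathfrak{q}$ for every such $b$, so $\mathfrak{b}\subseteq\mathfrak{q}$.

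For (i) $\Rightarrow$ (ii), the natural strategy is to promote the hypothesis $aSb\subseteq \mathfrak{q}$ to $\langle a\rangle\langle b\rangle \subseteq \mathfrak{q}$ and then invoke primeness. A generic product of an element of $\langle a\rangle$ and an element of $\langle b\rangle$ has the form $(u_1 a u_2)(u_3 b u_4)$, where each $u_i$ is either absent or an element of $S$. If at least one of the inner factors $u_2, u_3$ is present, then $u_2 u_3$ is a single element $s\in S$ and the product equals $u_1(asb)u_4$, which lies in $\mathfrak{q}$ because $asb\in aSb\subseteq \mathfrak{q}$ and $\mathfrak{q}$ is an ideal. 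The difficulty is the remaining \emph{degenerate} case $u_1(ab)u_4$, which reduces to showing $ab\in \mathfrak{q}$; without an identity in $S$, this is not automatic from $aSb\subseteq \mathfrak{q}$.

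I would resolve this by applying primeness one level deeper, to the principal ideal $\langle ab\rangle$. Any product $(u_1 ab u_2)(u_3 ab u_4)$ of two elements of $\langle ab\rangle$ regroups as $u_1\, a\,(b u_2 u_3 a)\, b\, u_4$, and the middle factor $b u_2 u_3 a$ (or simply $ba$ when both inner factors are absent) is a single element of $S$; hence every such product lies in $u_1(aSb)u_4\subseteq \mathfrak{q}$. This gives $\langle ab\rangle\langle ab\rangle\subseteq \mathfrak{q}$, so primeness yields $\langle ab\rangle\subseteq \mathfrak{q}$ and in particular $ab\in \mathfrak{q}$. Feeding this back disposes of the degenerate case and completes the verification $\langle a\rangle\langle b\rangle\subseteq \mathfrak{q}$; a final application of primeness gives $\langle a\rangle\subseteq \mathfrak{q}$ or $\langle b\rangle\subseteq \mathfrak{q}$, and hence $a\in \mathfrak{q}$ or $b\in \mathfrak{q}$.
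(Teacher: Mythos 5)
Your proof is correct. Note that the paper does not actually prove this proposition---it defers to \cite[Lemma 2.2]{PK92}---so there is no internal argument to compare against; your write-up supplies a self-contained proof. The direction (ii) $\Rightarrow$ (i) is handled exactly as one would expect, using that $\mathfrak{b}$ is a left ideal to get $aSb\subseteq\mathfrak{a}\mathfrak{b}\subseteq\mathfrak{q}$. For (i) $\Rightarrow$ (ii) you correctly identify the only genuine obstacle in the identity-free setting: among the products of $\langle a\rangle=\{a\}\cup aS\cup Sa\cup SaS$ with $\langle b\rangle$, every term with an inner factor collapses into $u_1(asb)u_4\in\mathfrak{q}$, but the bare term $ab$ does not. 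Your fix---showing $\langle ab\rangle\langle ab\rangle\subseteq\mathfrak{q}$ by regrouping $(u_1abu_2)(u_3abu_4)=u_1a(bu_2u_3a)bu_4$ with middle factor in $S$, then invoking primeness with $\mathfrak{a}=\mathfrak{b}=\langle ab\rangle$ to conclude $ab\in\mathfrak{q}$---is valid, and the ambiguity in the definition of the ideal product (set of products versus generated ideal) is harmless here since $\mathfrak{q}$ is an ideal, as the paper itself notes following \cite{BM58}. One could alternatively observe that $\langle a\rangle S\langle b\rangle\subseteq\mathfrak{q}$ and square the ideal generated by $\langle a\rangle\langle b\rangle$, but your version applies primeness to principal ideals only and is, if anything, cleaner.
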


Primitive ideals and prime ideals of a semigroup are related as follows.

\begin{proposition}\label{prtpr}
Every primitive ideal of a semigroup is a prime ideal.
\end{proposition}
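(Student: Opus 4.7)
The plan is to follow the standard ring-theoretic template. Write $\mathfrak{p}=\mathrm{Ann}_S(M)$ for some simple $S$-module $M$, assume $\mathfrak{a}\mathfrak{b}\subseteq \mathfrak{p}$ with $\mathfrak{b}\not\subseteq \mathfrak{p}$, and aim to deduce $\mathfrak{a}\subseteq \mathfrak{p}$.

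First, because $\mathfrak{b}\not\subseteq \mathrm{Ann}_S(M)$, there exist $b\in\mathfrak{b}$ and $m\in M$ with $bm\neq 0$. Hence the additive subgroup $\mathfrak{b}M$ is nonzero. It is an $S$-submodule of $M$ by the observation recorded just after the definition of submodule, so the simplicity of $M$ forces $\mathfrak{b}M=M$.

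Next, for an arbitrary $a\in\mathfrak{a}$ and an arbitrary $m\in M$, I would use the equality $M=\mathfrak{b}M$ to write $m=\sum_i \varepsilon_i b_i m_i$ with $\varepsilon_i\in\{-1,+1\}$, $b_i\in\mathfrak{b}$ and $m_i\in M$. Axioms (i) and (ii) of an $S$-module (together with the resulting fact that each $s\in S$ acts as a group homomorphism on $(M,+)$) then give
$$am=\sum_i \varepsilon_i (ab_i) m_i.$$
Each product $ab_i$ lies in $\mathfrak{a}\mathfrak{b}\subseteq \mathfrak{p}$, regardless of which convention for products of ideals (as discussed in the paragraph preceding Lemma~\ref{dint}) is adopted, so $(ab_i)m_i=0$. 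Consequently $am=0$, and since $m$ was arbitrary, $a\in\mathfrak{p}$. As $a\in\mathfrak{a}$ was arbitrary, this yields $\mathfrak{a}\subseteq\mathfrak{p}$.

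The argument is conceptually routine; the only bookkeeping subtlety — and thus the closest thing to an obstacle — is that, since $S$ need not contain an identity and $M$ is merely an abelian group, elements of $\mathfrak{b}M$ are finite signed integer combinations of products $bm$ rather than single products. This is handled automatically by the $S$-module axioms. I would avoid the elementwise formulation of Proposition~\ref{alpri}, because starting from $aSb\subseteq\mathfrak{p}$ one would still have to deal with the term $ab$ lying outside $aSb$, which requires an additional step that the ideal-theoretic form bypasses.
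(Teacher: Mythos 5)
Your proof is correct, but it follows a different route from the paper's. The paper argues contrapositively through the elementwise criterion of Proposition~\ref{alpri}: taking $a,b\notin\mathrm{Ann}_S(M)$, it picks $m,m'$ with $am\neq 0$, $bm'\neq 0$, invokes simplicity to produce $s\in S$ with $s(bm')=m$, and concludes $(asb)m'=am\neq 0$, so $aSb\nsubseteq\mathfrak{p}$. You instead verify the ideal-theoretic definition of primeness directly: from $\mathfrak{b}\nsubseteq\mathrm{Ann}_S(M)$ you get that the submodule $\mathfrak{b}M$ is nonzero, hence equals $M$ by simplicity, and then every $am$ unravels into a signed sum of terms $(ab_i)m_i$ with $ab_i\in\mathfrak{a}\mathfrak{b}\subseteq\mathfrak{p}$, forcing $a\in\mathfrak{p}$. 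The two arguments are of comparable length, but yours has a genuine advantage in this setting: the paper's step ``there exists $s\in S$ such that $s(bm')=m$'' presumes that $S(bm')$, rather than the subgroup it generates together with the integer multiples of $bm'$, already exhausts $M$; with the paper's definition of simple module (where submodules are subgroups closed under the $S$-action) one really only gets $m$ as a finite signed combination of elements $s(bm')$, so a single $s$ need not exist. Your use of $\mathfrak{b}M=M$ absorbs exactly this bookkeeping, and your observation that the conclusion is insensitive to which convention for the product $\mathfrak{a}\mathfrak{b}$ is adopted is apt, since each individual product $ab_i$ lies in $\mathfrak{a}\mathfrak{b}$ under all three conventions discussed before Lemma~\ref{dint}.
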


\begin{proof}
Suppose $\mathfrak{p}$ is a primitive ideal and $\mathfrak{p}=\mathrm{Ann}_S(M)$ for some simple $S$-module $M$. Let $a, b \notin\mathrm{Ann}_S(M)$. Then $am\neq 0$ and $bm'\neq 0$ for some $m, m'\in M.$ Since $M$ is simple, there exists an $s\in S$ such that $s(bm')=m$. Then
$$(asb)m'=a(s(bm'))=am\neq 0,$$
and hence $asb \notin \mathrm{Ann}_S(M)$. Therefore, $\mathrm{Ann}_S(M)$ is a prime ideal by Lemma \ref{alpri}.
\end{proof}

In the next section we talk about Jacobson topology on the set of primitive ideals of a semigroup and discuss about some of the topological properties of the corresponding structure spaces.

\section{Jacobson topology}

We shall introduce Jacobson
topology in $\mathrm{Prim}(S)$ by defining a closure operator for the subsets of $\mathrm{Prim}(S)$. Once we have a closure operator, closed sets are defined as sets which are invariant under this closure operator. Suppose $X$ is a subset of $\mathrm{Prim}(S)$. Set $\mathcal{D}_X=\bigcap_{\mathfrak{q}\in X}\mathfrak{q}.$ We define the closure of the set $X$ as 
\begin{equation}\label{clop}
\mathcal{C}(X)=\left\{ \mathfrak{p}\in \mathrm{Prim}(S) \mid \mathfrak{p}\supseteq \mathcal{D}_X \right\}.
\end{equation}

If $X=\{x\}$, we will write $\mathcal{C}(\{x\})$ as $\mathcal{C}(x)$. We wish to verify that the closure operation defined  in (\ref{clop}) satisfies Kuratowski's closure conditions and that is done in the following
 
\begin{proposition}\label{ztp}
The sets $\{\mathcal{C}(X)\}_{X\subseteq  \mathrm{Prim}(S)}$ satisfy the following conditions:
\begin{enumerate}[\upshape (i)]  
\itemsep -.2em 
\item\label{clee} $\mathcal{C}(\emptyset)=\emptyset$,
\item\label{clxx} $\mathcal{C}(X)\supseteq X$,
\item\label{clclx} $\mathcal{C}(\mathcal{C}(X))=\mathcal{C}(X),$
\item\label{clxy} $ \mathcal{C}(X\cup Y)=\mathcal{C}(X)\cup \mathcal{C}(Y).$ 
\end{enumerate}
\end{proposition}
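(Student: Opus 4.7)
The plan is to verify the four Kuratowski axioms directly from the definitions, where the only substantive step is (iv), which will hinge on the fact that primitive ideals are prime (Proposition \ref{prtpr}) and on Lemma \ref{dint}. The underlying structural observation I will use repeatedly is that $\mathcal{D}_X$ is a two-sided ideal of $S$ as an intersection of ideals, and that the assignment $X \mapsto \mathcal{D}_X$ is inclusion-reversing; consequently $\mathcal{C}$ is monotone, i.e.\ $X \subseteq Y$ implies $\mathcal{C}(X) \subseteq \mathcal{C}(Y)$.

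For (i), reading the empty intersection as $S$ itself, $\mathcal{C}(\emptyset)$ would have to consist of primitive ideals containing $S$; but primitive ideals are proper by definition, so no such ideal exists. For (ii), any $\mathfrak{p} \in X$ trivially satisfies $\mathfrak{p} \supseteq \bigcap_{\mathfrak{q} \in X} \mathfrak{q} = \mathcal{D}_X$, so it lies in $\mathcal{C}(X)$.

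For (iii), the inclusion $\mathcal{C}(X) \subseteq \mathcal{C}(\mathcal{C}(X))$ is just (ii) applied to $\mathcal{C}(X)$ in place of $X$. For the reverse, the plan is to establish $\mathcal{D}_{\mathcal{C}(X)} = \mathcal{D}_X$: every member of $\mathcal{C}(X)$ contains $\mathcal{D}_X$, so $\mathcal{D}_{\mathcal{C}(X)} \supseteq \mathcal{D}_X$, whereas $X \subseteq \mathcal{C}(X)$ together with inclusion-reversal of $\mathcal{D}$ gives $\mathcal{D}_{\mathcal{C}(X)} \subseteq \mathcal{D}_X$. With this equality in hand, the definition of $\mathcal{C}$ yields $\mathcal{C}(\mathcal{C}(X)) = \mathcal{C}(X)$ at once.

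For (iv), the forward inclusion $\mathcal{C}(X) \cup \mathcal{C}(Y) \subseteq \mathcal{C}(X \cup Y)$ is immediate from monotonicity applied to $X, Y \subseteq X \cup Y$. The converse is the main obstacle and is where the specific theory of primitive ideals enters: given $\mathfrak{p} \in \mathcal{C}(X \cup Y)$, we have $\mathfrak{p} \supseteq \mathcal{D}_{X \cup Y} = \mathcal{D}_X \cap \mathcal{D}_Y$, and by Lemma \ref{dint} this contains the product ideal $\mathcal{D}_X \mathcal{D}_Y$. Since $\mathfrak{p}$ is primitive it is prime by Proposition \ref{prtpr}, so $\mathcal{D}_X \subseteq \mathfrak{p}$ or $\mathcal{D}_Y \subseteq \mathfrak{p}$, placing $\mathfrak{p}$ in $\mathcal{C}(X) \cup \mathcal{C}(Y)$. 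This is the only step that uses anything beyond the poset structure of $\mathrm{Prim}(S)$, and it is exactly the step that would fail for an arbitrary family of proper ideals; hence the passage through Proposition \ref{prtpr} is essential.
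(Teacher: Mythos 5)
Your proposal is correct and follows essentially the same route as the paper: the decisive step is (iv), handled exactly as in the text via $\mathcal{D}_{X\cup Y}=\mathcal{D}_X\cap\mathcal{D}_Y\supseteq\mathcal{D}_X\mathcal{D}_Y$ together with Lemma \ref{dint} and the primeness of primitive ideals from Proposition \ref{prtpr}. Your explicit verifications of (i)--(iii), which the paper dismisses as straightforward, are also correct.
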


\begin{proof}
The proofs of (\ref{clee})-(\ref{clclx}) are straightforward, whereas for (\ref{clxy}), it is easy to see that $ \mathcal{C}(X\cup Y)\supseteq\mathcal{C}(X)\cup \mathcal{C}(Y).$ To obtain the the other inclusion, let $\mathfrak{p}\in \mathcal{C}(X\cup Y).$ Then
$$\mathfrak{p}\supseteq \mathcal{D}_{X\cup Y}=\mathcal{D}_X \cap \mathcal{D}_Y.$$
Since $\mathcal{D}_X$ and $\mathcal{D}_Y$ are ideals of $S$, by Lemma \ref{dint}, it follows that 
$$\mathcal{D}_X\mathcal{D}_Y\subseteq \mathcal{D}_X \cap \mathcal{D}_Y\subseteq \mathfrak{p}.$$
Since by Proposition \ref{prtpr}, $\mathfrak{p}$ is prime, either $\mathcal{D}_X\subseteq \mathfrak{p}$ or $\mathcal{D}_Y\subseteq \mathfrak{p}$ This means either $\mathfrak{p}\in \mathcal{C}(X)$ or $\mathfrak{p}\in \mathcal{C}(Y)$. Thus $ \mathcal{C}(X\cup Y)\subseteq\mathcal{C}(X)\cup \mathcal{C}(Y).$ 
\end{proof}

The set $\mathrm{Prim} (S)$ of primitive ideals of a semigroup $S$ topologized (the Jacobson topology) by the 
closure operator defined in (\ref{clop}) is called the \emph{structure space} of the semigroup $S$.
It is evident from (\ref{clop}) that if $\mathfrak{p}\neq \mathfrak{p}'$ for any two $\mathfrak{p}, \mathfrak{p}'\in \mathrm{Prim}(S)$, then $\mathcal{C}(\mathfrak{p})\neq \mathcal{C}(\mathfrak{p}').$ Thus

\begin{proposition}\label{t0a}
Every structure space $\mathrm{Prim}(S)$ is a $T_0$-space. 
\end{proposition}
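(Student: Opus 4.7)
The plan is to unpack the closure formula on singletons and use it to separate any two distinct primitive ideals by an open set. First, I would note that for a single $\mathfrak{p}\in\mathrm{Prim}(S)$ the intersection $\mathcal{D}_{\{\mathfrak{p}\}}$ collapses to $\mathfrak{p}$ itself, so (\ref{clop}) specialises to
$$\mathcal{C}(\mathfrak{p})=\{\mathfrak{q}\in\mathrm{Prim}(S)\mid \mathfrak{q}\supseteq\mathfrak{p}\}.$$
In particular $\mathfrak{p}\in\mathcal{C}(\mathfrak{p})$, which is the only fact I will really need beyond the definition.

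Next, given $\mathfrak{p}\neq\mathfrak{p}'$ in $\mathrm{Prim}(S)$, I would observe that these cannot mutually contain one another, so without loss of generality $\mathfrak{p}\not\subseteq\mathfrak{p}'$. By the description of $\mathcal{C}(\mathfrak{p})$ above, this forces $\mathfrak{p}'\notin\mathcal{C}(\mathfrak{p})$, while $\mathfrak{p}\in\mathcal{C}(\mathfrak{p})$. Taking complements, the open set $U=\mathrm{Prim}(S)\setminus\mathcal{C}(\mathfrak{p})$ contains $\mathfrak{p}'$ but not $\mathfrak{p}$. This is precisely the $T_0$ separation condition.

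There is essentially no obstacle here: the argument is a direct consequence of the definition of $\mathcal{C}$, and in fact is exactly the parenthetical remark the author makes just before the proposition, reformulated in terms of open sets. The only mildly delicate point is to make explicit that distinct ideals must fail to contain each other in at least one direction, which is trivial set theory but worth stating so that the ``without loss of generality'' is justified. Note that primality of the ideals (Proposition \ref{prtpr}) is \emph{not} needed for this result, unlike in the proof of Proposition \ref{ztp}(\ref{clxy}); $T_0$-ness is a purely order-theoretic property of the hull-kernel closure operator.
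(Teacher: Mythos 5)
Your argument is correct and is essentially the paper's own proof: the author simply remarks that distinct primitive ideals have distinct closures under $\mathcal{C}$, which is the standard equivalent of the $T_0$ axiom, and your write-up just makes the resulting separating open set explicit. Nothing further is needed.
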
 

\begin{theorem}\label{csb}
If $S$ is a semigroup with identity then the structure space $\mathrm{Prim}(S)$ is compact. 
\end{theorem}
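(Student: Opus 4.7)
The plan is to follow the classical compactness argument for the primitive spectrum of a unital ring, transposed to the semigroup setting. First observe that every closed subset of $\mathrm{Prim}(S)$ has the form
\[
V(\mathfrak{a}) := \{\mathfrak{p} \in \mathrm{Prim}(S) : \mathfrak{p} \supseteq \mathfrak{a}\}
\]
for some ideal $\mathfrak{a}$ of $S$, since by the very definition of the closure one has $\mathcal{C}(X) = V(\mathcal{D}_X)$. Thus any open cover of $\mathrm{Prim}(S)$ can be written $\{\mathrm{Prim}(S)\setminus V(\mathfrak{a}_\lambda)\}_{\lambda\in\Lambda}$ with $\bigcap_\lambda V(\mathfrak{a}_\lambda) = \emptyset$.

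Put $\mathfrak{b} = \langle \bigcup_\lambda \mathfrak{a}_\lambda\rangle$. Because each primitive ideal is itself an ideal, $\mathfrak{p} \supseteq \mathfrak{a}_\lambda$ for every $\lambda$ is equivalent to $\mathfrak{p}\supseteq \mathfrak{b}$, so $V(\mathfrak{b}) = \bigcap_\lambda V(\mathfrak{a}_\lambda) = \emptyset$. The crux of the proof is to upgrade this to $\mathfrak{b} = S$: assuming $\mathfrak{b}$ were proper, Zorn's lemma applied to the poset of proper ideals of $S$ containing $\mathfrak{b}$ (a chain has proper union because $1$ avoids every member) would produce a maximal ideal $\mathfrak{m}\supseteq \mathfrak{b}$, and one would need $\mathfrak{m}$ to itself be primitive in order to contradict $V(\mathfrak{b})=\emptyset$. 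I expect this to be the main obstacle, since module theory over noncommutative semigroups (with abelian-group-valued modules) is nonstandard and there is no automatic quotient construction furnishing a simple $S$-module with annihilator $\mathfrak{m}$.

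Once $\mathfrak{b} = S$ is granted, $1\in\mathfrak{b}=\langle\bigcup_\lambda\mathfrak{a}_\lambda\rangle$, so the finite-character hypothesis provides a finite set $F\subseteq\bigcup_\lambda\mathfrak{a}_\lambda$ with $1\in\langle F\rangle$. Pick indices $\lambda_1,\ldots,\lambda_n$ such that $F\subseteq \mathfrak{a}_{\lambda_1}\cup\cdots\cup\mathfrak{a}_{\lambda_n}$. A finite union of two-sided ideals of a semigroup is itself an ideal (an easy check, since only multiplication is involved), so $\langle F\rangle\subseteq \mathfrak{a}_{\lambda_1}\cup\cdots\cup\mathfrak{a}_{\lambda_n}$, and this union contains $1$, hence equals $S$. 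Therefore
\[
\bigcap_{j=1}^n V(\mathfrak{a}_{\lambda_j}) \;=\; V(\mathfrak{a}_{\lambda_1}\cup\cdots\cup\mathfrak{a}_{\lambda_n}) \;=\; \emptyset,
\]
and $\{\mathrm{Prim}(S)\setminus V(\mathfrak{a}_{\lambda_j})\}_{j=1}^n$ is the required finite subcover.
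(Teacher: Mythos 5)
Your proof follows the same route as the paper's: form the ideal generated by $\bigcup_\lambda \mathfrak{a}_\lambda$, argue it must be all of $S$, and then use the finite-character hypothesis together with the fact that a finite union of semigroup ideals is again an ideal to extract a finite subcover. The second half of your argument is sound, and in fact slightly more careful than the paper's own write-up: the passage from $1\in\langle F\rangle$ to $1\in\mathfrak{a}_{\lambda_1}\cup\dots\cup\mathfrak{a}_{\lambda_n}$ really does require the observation that a finite union of two-sided ideals is an ideal (true for semigroups, false for rings), which you make explicit and the paper elides.

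The step you flag as the main obstacle is a genuine gap, and it is not one you have merely failed to see how to close: the paper's proof commits exactly the same leap. Having produced a maximal ideal $\mathfrak{m}\supseteq\mathfrak{b}$ (which does exist by the Zorn argument you sketch, since a proper ideal of a monoid is precisely one avoiding $1$), the paper concludes ``$\mathfrak{m}\in\mathcal{C}(K_\lambda)=K_\lambda$''; but $\mathcal{C}(K_\lambda)$ consists by definition of \emph{primitive} ideals containing $\mathcal{D}_{K_\lambda}$, so this step silently assumes $\mathfrak{m}\in\mathrm{Prim}(S)$, which is nowhere established. What is actually needed is a lemma of the form ``every proper ideal of a monoid is contained in a primitive ideal'' --- the analogue, for unital rings, of passing to a maximal left ideal $L$, forming the simple module $R/L$, and taking its annihilator, a two-sided primitive ideal containing every two-sided ideal contained in $L$. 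With the paper's notion of $S$-module (an arbitrary abelian group carrying an $S$-action, with no quotient construction $S/\mathfrak{m}$ available and no guarantee even that $\mathrm{Prim}(S)\neq\emptyset$), no such lemma is available off the shelf, so both your proof and the paper's are incomplete at the same point; the rest of the argument would go through verbatim once one knows $V(\mathfrak{b})\neq\emptyset$ for every proper ideal $\mathfrak{b}$.
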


\begin{proof}
Suppose  $\{K_{ \lambda}\}_{\lambda \in \Lambda}$ is a family of closed sets of $\mathrm{Prim}(S)$  with $\bigcap_{\lambda\in \Lambda}K_{ \lambda}=\emptyset.$ Set 
$$\mathfrak{a}=\left\langle \bigcup_{\lambda \in \Lambda} \mathcal{D}_{K_{\lambda}}\right\rangle.$$
If $\mathfrak{a}\neq S,$ then we must have a maximal ideal $\mathfrak{m}$ of $S$ such that $\mathfrak{a}\subseteq \mathfrak{m}.$ Moreover, 
$$\mathcal{D}_{K_{\lambda}}\subseteq \mathfrak{a}\subseteq \mathfrak{m},$$
for all $\lambda \in \Lambda.$ Therefore $\mathfrak{m}\in \mathcal{C}(K_{\lambda})=K_{\lambda}$ for all $\lambda \in \Lambda$, a contradiction of our assumption. Hence $\mathfrak{a}=S,$ and the identity $1\in \mathfrak{a}.$ Since $\mathfrak{a}$ is of finite character we must have a finite subset $\{\lambda_{\scriptscriptstyle 1}, \ldots, \lambda_{\scriptscriptstyle n}\}$ of $\Lambda$ such that $1\in \bigcup_{i=1}^n \mathcal{D}_{K_{\lambda_i}}.$ This implies $\bigcap_{\lambda_i}K_{\lambda_i}=\emptyset,$ which establishes the finite intersection property.  
\end{proof} 

Recall that a nonempty closed subset $K$ of a topological space $X$ is \emph{irreducible} if $K\neq K_{\scriptscriptstyle 1}\cup K_{\scriptscriptstyle 2}$ for any two proper closed subsets  $K_{\scriptscriptstyle 1}, K_{\scriptscriptstyle 2}$ of $K$. A maximal irreducible subset of a topological space $X$ is called an
\emph{irreducible component} of $X.$ A point $x$ in a closed subset $K$ is called a \emph{generic point} of $K$ if $K = \mathcal{C}(x).$  

\begin{lemma}\label{lemprime}
The only irreducible closed subsets of a structure space $\mathrm{Prim}(S)$ are of the form: $\{\mathcal{C}(\mathfrak{p})\}_{\mathfrak{p}\in \mathrm{Prim}(S)}$.  
\end{lemma}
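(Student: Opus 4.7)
My plan is to establish the two inclusions separately. The forward one---that each $\mathcal{C}(\mathfrak{p})$ with $\mathfrak{p}\in\mathrm{Prim}(S)$ is irreducible---is routine; the converse, that every irreducible closed $K\subseteq\mathrm{Prim}(S)$ has the form $\mathcal{C}(\mathfrak{p})$, is where the work lies. The natural candidate for the generic point of $K$ is $\mathfrak{p}:=\mathcal{D}_K$, and the task splits into (a) showing $K=\mathcal{C}(\mathfrak{p})$ once we know $\mathfrak{p}\in\mathrm{Prim}(S)$, and (b) actually verifying that $\mathfrak{p}$ is primitive, not merely prime.

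For the forward direction, suppose $\mathcal{C}(\mathfrak{p})=K_{1}\cup K_{2}$ with $K_{1},K_{2}$ proper closed subsets. Since $\mathfrak{p}\in\mathcal{C}(\mathfrak{p})$, it lies in one of them, say $\mathfrak{p}\in K_{1}$; then $\mathcal{D}_{K_{1}}\subseteq\mathfrak{p}$, and any $\mathfrak{q}\in\mathcal{C}(\mathfrak{p})$ satisfies $\mathfrak{q}\supseteq\mathfrak{p}\supseteq\mathcal{D}_{K_{1}}$, forcing $\mathfrak{q}\in K_{1}$. Hence $K_{1}=\mathcal{C}(\mathfrak{p})$, a contradiction. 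For the converse, since $K$ is closed, $K=\mathcal{C}(K)=\{\mathfrak{q}\in\mathrm{Prim}(S):\mathfrak{q}\supseteq\mathfrak{p}\}$, so the identification $K=\mathcal{C}(\mathfrak{p})$ is automatic once $\mathfrak{p}\in\mathrm{Prim}(S)$. To show that $\mathfrak{p}$ is at least prime, take ideals $\mathfrak{a},\mathfrak{b}$ of $S$ with $\mathfrak{a}\mathfrak{b}\subseteq\mathfrak{p}$. For each $\mathfrak{q}\in K$ one has $\mathfrak{a}\mathfrak{b}\subseteq\mathfrak{q}$, and by Proposition \ref{prtpr}, $\mathfrak{q}$ is prime, so $\mathfrak{a}\subseteq\mathfrak{q}$ or $\mathfrak{b}\subseteq\mathfrak{q}$. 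Writing $V(\mathfrak{c}):=\{\mathfrak{r}\in\mathrm{Prim}(S):\mathfrak{r}\supseteq\mathfrak{c}\}$, a quick check shows $V(\mathfrak{c})$ is closed (since $\mathcal{D}_{V(\mathfrak{c})}\supseteq\mathfrak{c}$ yields $\mathcal{C}(V(\mathfrak{c}))\subseteq V(\mathfrak{c})$), so $K=(K\cap V(\mathfrak{a}))\cup(K\cap V(\mathfrak{b}))$ is a decomposition into closed subsets of $K$. Irreducibility forces one of them to equal $K$, say $K\subseteq V(\mathfrak{a})$, whence $\mathfrak{a}\subseteq\mathcal{D}_{K}=\mathfrak{p}$.

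The main obstacle is promoting $\mathfrak{p}$ from prime to primitive: realizing $\mathcal{D}_{K}$ as the annihilator of some simple $S$-module. For noncommutative rings not every prime ideal is primitive, so one cannot expect this for free, and a semigroup-specific construction is required. A natural route would be to produce, via a Zorn-type maximality argument, a maximal proper $S$-submodule of a suitable cyclic module whose two-sided annihilator coincides with $\mathfrak{p}$, and then take the corresponding simple quotient as the desired module. This delicate step is where the module theory of noncommutative semigroups developed in Section~1 has to be exploited in earnest; without it one only obtains the weaker statement that irreducible closed subsets correspond to prime ideals lying in the closure of $\mathrm{Prim}(S)$ under intersection.
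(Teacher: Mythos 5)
Your forward direction and your reduction of the converse to the single claim that $\mathcal{D}_K$ is primitive are both correct, and your proof that $\mathcal{D}_K$ is prime (via the decomposition $K=(K\cap V(\mathfrak{a}))\cup(K\cap V(\mathfrak{b}))$ in your notation) is sound and in fact more carefully justified than the corresponding step in the paper. But the proposal does not prove the lemma: the last step --- realizing the prime ideal $\mathfrak{p}=\mathcal{D}_K$ as $\mathrm{Ann}_S(M)$ for some simple $S$-module $M$ --- is exactly where you stop, and the ``Zorn-type maximality argument'' you gesture at is neither carried out nor available in this generality. The difficulty is not a technicality. In the ring-theoretic setting, from which all the definitions here are transplanted verbatim, the analogous statement fails: for $R=\mathds{Z}$ the primitive ideals are the maximal ideals $(p)$, every proper closed subset of $\mathrm{Prim}(\mathds{Z})$ is finite, so the whole infinite space is irreducible, yet no $(p)$ is contained in all the others and hence there is no generic point; the intersection $\mathcal{D}_K=0$ is prime but not primitive, and no construction of a simple module can change that. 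A correct proof for semigroups would therefore have to exploit something genuinely special about semigroup modules, and nothing in Section~1 supplies it.

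You have, in fact, put your finger on a soft spot in the paper itself: its proof of this lemma only argues that if the candidate generic point $\mathfrak{a}$ is \emph{not prime} then $\mathcal{C}(\mathfrak{a})$ is reducible, and it never addresses the case of $\mathfrak{a}$ prime (or semiprime, i.e.\ an intersection of primitive ideals) but not primitive --- precisely the case you isolate. So your diagnosis of where the work lies is accurate, but as written your argument establishes only the weaker statement that every irreducible closed subset equals $\{\mathfrak{q}\in\mathrm{Prim}(S)\mid\mathfrak{q}\supseteq\mathfrak{p}\}$ for some prime ideal $\mathfrak{p}$ that is an intersection of primitive ideals, as you yourself acknowledge. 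To turn this into a complete solution you would need either a semigroup-specific argument that such a $\mathfrak{p}$ is always primitive, or a counterexample showing the lemma needs an additional hypothesis.
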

   
\begin{proof} 
Since $\{\mathfrak{p}\}$ is irreducible, so is  $\mathcal{C}(\mathfrak{p}).$  Suppose $\mathcal{C}(\{\mathfrak{a}\})$ is an irreducible closed subset of $\mathrm{Prim}(S)$ and $\mathfrak{a}\notin \mathrm{Prim}(S).$ This implies there exist ideals $\mathfrak{b}$ and $\mathfrak{c}$ of $S$ such that  $\mathfrak{b}\nsubseteq \mathfrak{a}$ and $\mathfrak{c}\nsubseteq \mathfrak{a}$, but $\mathfrak{b}\mathfrak{c}\subseteq \mathfrak{a}$. Then   
$$\mathcal{C}(\langle \mathfrak{a}, \mathfrak{b}\rangle)\cup \mathcal{C}(\langle \mathfrak{a},\mathfrak{c}\rangle)=\mathcal{C}(\langle \mathfrak{a},  \mathfrak{b}\mathfrak{c}\rangle)=\mathcal{C}(\mathfrak{a}).$$
But $\mathcal{C}(\langle \mathfrak{a},  \mathfrak{b}\rangle)\neq \mathcal{C}(\mathfrak{a})$ and $\mathcal{C}(\langle \mathfrak{a},  \mathfrak{c}\rangle)\neq \mathcal{C}(\mathfrak{a}),$ and hence $\mathcal{C}(\mathfrak{a})$ is not irreducible.
\end{proof}

\begin{proposition}
Every irreducible closed subset of $\mathrm{Prim}(S)$ has a unique generic point.
\end{proposition}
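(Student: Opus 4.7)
The plan is to reduce the existence part to Lemma \ref{lemprime} and the uniqueness part to the $T_0$ separation already established in Proposition \ref{t0a}. Let $K$ be an irreducible closed subset of $\mathrm{Prim}(S)$. By Lemma \ref{lemprime}, $K = \mathcal{C}(\mathfrak{p})$ for some primitive ideal $\mathfrak{p}$; since $\mathfrak{p} \supseteq \mathcal{D}_{\{\mathfrak{p}\}} = \mathfrak{p}$, we have $\mathfrak{p} \in \mathcal{C}(\mathfrak{p}) = K$, so $\mathfrak{p}$ is a generic point of $K$ by definition. This handles existence with essentially no work.

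For uniqueness, suppose $\mathfrak{p}$ and $\mathfrak{p}'$ are both generic points of the same irreducible closed subset $K$, so that $K = \mathcal{C}(\mathfrak{p}) = \mathcal{C}(\mathfrak{p}')$. The remark preceding Proposition \ref{t0a} records that the map $\mathfrak{p} \mapsto \mathcal{C}(\mathfrak{p})$ is injective on $\mathrm{Prim}(S)$, which is exactly the $T_0$ axiom for this space. Consequently $\mathfrak{p} = \mathfrak{p}'$, proving uniqueness.

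I do not expect any real obstacle: once Lemma \ref{lemprime} is in hand, existence is immediate, and once the $T_0$ property is in hand, uniqueness is immediate. The only subtlety worth making explicit is verifying that the generic point produced by Lemma \ref{lemprime} actually lies in $K$; this is transparent from the definition of $\mathcal{C}$ in equation (\ref{clop}), since any $\mathfrak{p} \in \mathrm{Prim}(S)$ trivially contains the intersection $\mathcal{D}_{\{\mathfrak{p}\}} = \mathfrak{p}$.
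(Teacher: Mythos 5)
Your proof is correct and follows the same route as the paper: existence via Lemma \ref{lemprime} and uniqueness via the $T_0$ property of Proposition \ref{t0a}. You simply spell out the details (in particular, that $\mathfrak{p}\in\mathcal{C}(\mathfrak{p})$) which the paper leaves implicit.
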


\begin{proof}
The existence of generic point follows from Lemma \ref{lemprime}, and the uniqueness of such a point follows from Proposition \ref{t0a}. 
\end{proof} 

The irreducible components of a structure space can be characterised in terms of minimal primitive ideals, and we have that in the following

\begin{proposition}\label{thmirre}
The irreducible components of a structure space $\mathrm{Prim}(S)$ are the closed sets $\mathcal{C}(\mathfrak{p})$, where $\mathfrak{p}$ is a minimal primitive ideal of $S$. 
\end{proposition}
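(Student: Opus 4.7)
The plan is to combine Lemma \ref{lemprime} with an order-reversing correspondence between primitive ideals and their closures. The crucial observation is that for any $\mathfrak{p}, \mathfrak{q} \in \mathrm{Prim}(S)$, one has $\mathcal{C}(\mathfrak{p}) \subseteq \mathcal{C}(\mathfrak{q})$ if and only if $\mathfrak{q} \subseteq \mathfrak{p}$. This is immediate from (\ref{clop}): if the inclusion of closures holds, then since $\mathfrak{p} \in \mathcal{C}(\mathfrak{p})$, we get $\mathfrak{p} \in \mathcal{C}(\mathfrak{q})$, i.e.\ $\mathfrak{p} \supseteq \mathfrak{q}$; conversely, if $\mathfrak{q} \subseteq \mathfrak{p}$, any primitive ideal containing $\mathfrak{p}$ automatically contains $\mathfrak{q}$. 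Since Lemma \ref{lemprime} already tells us that every irreducible closed subset of $\mathrm{Prim}(S)$ has the form $\mathcal{C}(\mathfrak{p})$ for some $\mathfrak{p} \in \mathrm{Prim}(S)$, maximality among irreducible closed sets will translate directly into minimality among primitive ideals.

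For the forward direction, I would fix a minimal primitive ideal $\mathfrak{p}$ and verify that $\mathcal{C}(\mathfrak{p})$ is an irreducible component. Irreducibility is given by Lemma \ref{lemprime}. For maximality, suppose $\mathcal{C}(\mathfrak{p}) \subseteq K$ for some irreducible closed subset $K$ of $\mathrm{Prim}(S)$. Again by Lemma \ref{lemprime}, $K = \mathcal{C}(\mathfrak{q})$ for some $\mathfrak{q} \in \mathrm{Prim}(S)$, and the order-reversing correspondence above yields $\mathfrak{q} \subseteq \mathfrak{p}$. Minimality of $\mathfrak{p}$ then forces $\mathfrak{q} = \mathfrak{p}$, and hence $K = \mathcal{C}(\mathfrak{p})$.

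For the converse, let $K$ be an irreducible component of $\mathrm{Prim}(S)$. Lemma \ref{lemprime} gives $K = \mathcal{C}(\mathfrak{p})$ for some $\mathfrak{p} \in \mathrm{Prim}(S)$. If $\mathfrak{p}$ failed to be minimal, there would exist a primitive ideal $\mathfrak{q}$ with $\mathfrak{q} \subsetneq \mathfrak{p}$. The correspondence then gives $\mathcal{C}(\mathfrak{q}) \supseteq \mathcal{C}(\mathfrak{p}) = K$, and this inclusion is strict because $\mathfrak{q} \in \mathcal{C}(\mathfrak{q}) \setminus \mathcal{C}(\mathfrak{p})$ (indeed $\mathfrak{q} \not\supseteq \mathfrak{p}$). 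Since $\mathcal{C}(\mathfrak{q})$ is irreducible by Lemma \ref{lemprime}, this contradicts the maximality of $K$ among irreducible closed subsets, so $\mathfrak{p}$ must be a minimal primitive ideal.

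I do not expect any genuine obstacle here: everything reduces to the tautological translation between the inclusion ordering on $\mathrm{Prim}(S)$ and the reverse inclusion ordering on the closures $\mathcal{C}(\mathfrak{p})$, combined with the already-established classification of irreducible closed subsets in Lemma \ref{lemprime}. The only mild subtlety is confirming that the strict containment $\mathfrak{q} \subsetneq \mathfrak{p}$ produces a \emph{strict} containment of closures, which is handled by exhibiting $\mathfrak{q}$ itself as a point lying in $\mathcal{C}(\mathfrak{q}) \setminus \mathcal{C}(\mathfrak{p})$.
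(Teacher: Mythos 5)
Your proof is correct and follows essentially the same route as the paper: Lemma \ref{lemprime} combined with the antitone correspondence $\mathcal{C}(\mathfrak{p})\subseteq\mathcal{C}(\mathfrak{q})\Leftrightarrow\mathfrak{q}\subseteq\mathfrak{p}$. You are in fact slightly more complete than the paper, which only verifies that $\mathcal{C}(\mathfrak{p})$ is an irreducible component when $\mathfrak{p}$ is minimal (leaving the converse implicit), and your direct maximality argument avoids the paper's appeal to the existence of a maximal irreducible subset containing $\mathcal{C}(\mathfrak{p})$.
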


\begin{proof}
If $\mathfrak{p}$ is a minimal primitive ideal, then by Lemma \ref{lemprime}, $\mathcal{C}(\mathfrak{p})$ is irreducible. If $\mathcal{C}(\mathfrak{p})$ is not a maximal irreducible subset of $\mathrm{Prim}(S)$, then there exists a maximal irreducible subset $\mathcal{C}(\mathfrak{p}')$ with $\mathfrak{p}'\in  \mathrm{Prim}(S)$ such that $\mathcal{C}(\mathfrak{p})\subsetneq \mathcal{C}(\mathfrak{p}')$. This implies that $\mathfrak{p}\in \mathcal{C}(\mathfrak{p}')$ and hence $\mathfrak{p}'\subsetneq \mathfrak{p}$, contradicting the minimality property of $\mathfrak{p}$.
\end{proof}

Recall that a semigroup is called \emph{Noetherian} if it satisfies the ascending
chain condition, whereas a topological space $X$ is called \emph{Noetherian} if the descending chain condition holds for closed subsets of $X.$ A relation between these two notions is shown in the following 

\begin{proposition}\label{fwn}
If a semigroup $S$ is Noetherian, then $\mathrm{Prim}(S)$ is a Noetherian space.  
\end{proposition}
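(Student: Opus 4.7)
The plan is to set up an order-reversing correspondence between the closed subsets of $\mathrm{Prim}(S)$ and a family of ideals of $S$, and then transfer the ascending chain condition on $S$ to the descending chain condition on $\mathrm{Prim}(S)$. Concretely, I would assign to each closed set $K\subseteq \mathrm{Prim}(S)$ the ideal $\mathcal{D}_K=\bigcap_{\mathfrak{q}\in K}\mathfrak{q}$ already used in the definition of the closure operator.

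The first step is to observe that this assignment is inclusion-reversing: if $K\subseteq K'$ then $\mathcal{D}_{K'}\subseteq \mathcal{D}_K$, simply because an intersection over a larger index set is smaller. The second step, which is the crux of the argument, is to show that strict inclusions are preserved under this assignment. For this I would use that a closed set $K$ is recovered from $\mathcal{D}_K$ via the formula
\begin{equation*}
K=\mathcal{C}(K)=\{\mathfrak{p}\in \mathrm{Prim}(S)\mid \mathfrak{p}\supseteq \mathcal{D}_K\},
\end{equation*}
which holds because $K$ is closed. Hence if $\mathcal{D}_K=\mathcal{D}_{K'}$ then necessarily $K=K'$, so the map $K\mapsto \mathcal{D}_K$ is injective on closed sets and therefore carries strict inclusions to strict inclusions.

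The final step is the chain-condition transfer. Suppose, for contradiction, that $\mathrm{Prim}(S)$ admits a strictly descending chain of closed subsets
\begin{equation*}
K_{\scriptscriptstyle 1}\supsetneq K_{\scriptscriptstyle 2}\supsetneq K_{\scriptscriptstyle 3}\supsetneq \cdots .
\end{equation*}
Applying the two properties above, this yields a strictly ascending chain of ideals $\mathcal{D}_{K_{\scriptscriptstyle 1}}\subsetneq \mathcal{D}_{K_{\scriptscriptstyle 2}}\subsetneq \mathcal{D}_{K_{\scriptscriptstyle 3}}\subsetneq \cdots$ of $S$, contradicting the assumption that $S$ is Noetherian. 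Thus $\mathrm{Prim}(S)$ satisfies the descending chain condition on closed subsets, i.e.\ it is a Noetherian space.

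The only real subtlety is the injectivity step (preservation of strict inclusions), and this rests entirely on the identity $K=\mathcal{C}(K)$ for closed sets, which is immediate from the definition of the Jacobson topology given earlier. Everything else is a direct application of the Noetherian hypothesis on $S$.
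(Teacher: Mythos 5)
Your proof is correct and follows essentially the same route as the paper: both arguments transfer the ascending chain condition on ideals of $S$ to the descending chain condition on closed subsets of $\mathrm{Prim}(S)$ via the order-reversing assignment $K\mapsto \mathcal{D}_K$. If anything, your version is the more careful one, since the paper simply asserts that a descending chain $\mathcal{C}(\mathfrak{a}_{\scriptscriptstyle 1})\supseteq \mathcal{C}(\mathfrak{a}_{\scriptscriptstyle 2})\supseteq\cdots$ yields an ascending chain $\mathfrak{a}_{\scriptscriptstyle 1}\subseteq \mathfrak{a}_{\scriptscriptstyle 2}\subseteq\cdots$, whereas your explicit use of the identity $K=\mathcal{C}(K)=\{\mathfrak{p}\mid \mathfrak{p}\supseteq \mathcal{D}_K\}$ for closed $K$ is exactly what is needed to make the strictness (and hence the contradiction with Noetherianity) rigorous.
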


\begin{proof}
It suffices to show that a collection of closed sets in $\mathrm{Prim}(S)$ satisfy descending chain condition. Let $\mathcal{C}(\mathfrak{a}_{\scriptscriptstyle 1})\supseteq \mathcal{C}(\mathfrak{a}_{\scriptscriptstyle 2})\supseteq \cdots$
be a descending chain of closed sets in $\mathrm{Prim}(S)$. Then, $\mathfrak{a}_{\scriptscriptstyle 1}\subseteq \mathfrak{a}_{\scriptscriptstyle 2}\subseteq \cdots$ is an ascending chain of ideals in $S.$ Since $S$ is Noetherian, the chain stabilizes at some $n \in \mathds{N}.$ Hence, $\mathcal{C}(\mathfrak{a}_{\scriptscriptstyle n}) = \mathcal{C}(\mathfrak{a}_{\scriptscriptstyle n+k})$ for any $k.$ Thus $\mathrm{Prim}(S)$ is Noetherian.
\end{proof}

\begin{corollary}
The set of minimal primitive ideals in a Noetherian semigroup is finite.
\end{corollary}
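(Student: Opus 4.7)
The plan is to combine Proposition \ref{fwn} and Proposition \ref{thmirre} with the standard fact that a Noetherian topological space has only finitely many irreducible components, and then convert the resulting bijection into a cardinality statement.

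First, since $S$ is Noetherian, Proposition \ref{fwn} gives that $\mathrm{Prim}(S)$ is a Noetherian topological space. Next, I would establish (or cite) the classical fact that any Noetherian space has only finitely many irreducible components. The argument uses Noetherian induction on closed subsets: let $\mathcal{F}$ be the family of closed subsets of $\mathrm{Prim}(S)$ that cannot be written as a finite union of irreducible closed subsets. If $\mathcal{F} \neq \emptyset$, the descending chain condition supplies a minimal element $K \in \mathcal{F}$. Such a $K$ cannot itself be irreducible, so $K = K_{\scriptscriptstyle 1} \cup K_{\scriptscriptstyle 2}$ with $K_{\scriptscriptstyle 1}, K_{\scriptscriptstyle 2} \subsetneq K$ closed; by minimality of $K$, neither $K_{\scriptscriptstyle 1}$ nor $K_{\scriptscriptstyle 2}$ lies in $\mathcal{F}$, so each is a finite union of irreducibles and hence so is $K$, a contradiction. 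Applying this to the whole space, $\mathrm{Prim}(S) = Z_{\scriptscriptstyle 1} \cup \cdots \cup Z_{\scriptscriptstyle n}$ for finitely many irreducible closed $Z_i$, and the maximal elements among the $Z_i$ are precisely the irreducible components; in particular there are only finitely many of them.

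Finally, I would invoke Proposition \ref{thmirre} to identify the irreducible components of $\mathrm{Prim}(S)$ with the sets $\mathcal{C}(\mathfrak{p})$, as $\mathfrak{p}$ ranges over the minimal primitive ideals of $S$. Since distinct primitive ideals yield distinct closures by Proposition \ref{t0a}, the assignment $\mathfrak{p} \mapsto \mathcal{C}(\mathfrak{p})$ is a bijection between minimal primitive ideals and irreducible components. Finiteness of the latter forces finiteness of the former. I do not anticipate a serious obstacle here: the only point that merits a brief sentence is the Noetherian-induction proof that a Noetherian space decomposes into finitely many irreducibles, which is standard and already implicit in the setup of the paper.
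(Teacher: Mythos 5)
Your proof is correct and follows the same route as the paper: Proposition \ref{fwn} makes $\mathrm{Prim}(S)$ Noetherian, the standard Noetherian-induction argument yields finitely many irreducible components, and Proposition \ref{thmirre} identifies these with the $\mathcal{C}(\mathfrak{p})$ for $\mathfrak{p}$ minimal primitive. You merely spell out two steps the paper leaves implicit (the finiteness of irreducible components in a Noetherian space and the injectivity of $\mathfrak{p}\mapsto\mathcal{C}(\mathfrak{p})$ via the $T_0$ property), which is a welcome addition rather than a deviation.
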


\begin{proof}
By Proposition \ref{fwn}, $\mathrm{Prim}(S)$ is Noetherian, thus $\mathrm{Prim}(S)$ has a finitely many irreducible
components. By Proposition \ref{thmirre}, every irreducible closed subset of $\mathrm{Prim}(S)$
is of form $\mathcal{C}(\mathfrak{p}),$ where $\mathfrak{p}$ is a minimal primitive ideal. Thus $\mathcal{C}(\mathfrak{p})$ is irreducible components if and only if $\mathfrak{p}$ is minimal primitive. Hence, $S$ has
only finitely many minimal primitive ideals. 
\end{proof}
 
\begin{proposition}\label{conmap}
Suppose $\phi\colon S\to T$ is a semigroup homomorphism and define the map $\phi_*\colon  \mathrm{Prim}(T)\to \mathrm{Prim}(S)$ by  $\phi_*(\mathfrak{p})=\phi\inv(\mathfrak{p})$, where $\mathfrak{p}\in\mathrm{Prim}(T).$ Then $\phi_*$ is a continuous map.
\end{proposition}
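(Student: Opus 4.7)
The plan is to show that the preimage under $\phi_*$ of every closed subset of $\mathrm{Prim}(S)$ is closed in $\mathrm{Prim}(T)$. By the definition in (\ref{clop}), a typical closed set of $\mathrm{Prim}(S)$ has the form $\mathcal{C}(X)=\{\mathfrak{p}\in\mathrm{Prim}(S) : \mathfrak{p}\supseteq\mathcal{D}_X\}$ for some subset $X\subseteq\mathrm{Prim}(S)$. First I would compute $\phi_*\inv(\mathcal{C}(X))$ explicitly: a primitive ideal $\mathfrak{q}$ of $T$ lies in this preimage exactly when $\phi\inv(\mathfrak{q})\supseteq\mathcal{D}_X$, which by the standard set-theoretic equivalence amounts to $\phi(\mathcal{D}_X)\subseteq\mathfrak{q}$.

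Write $Z:=\{\mathfrak{q}\in\mathrm{Prim}(T) : \phi(\mathcal{D}_X)\subseteq\mathfrak{q}\}$ for this preimage. To establish continuity it suffices to verify that $Z=\mathcal{C}(Z)$. The inclusion $Z\subseteq\mathcal{C}(Z)$ is immediate from Proposition \ref{ztp}(\ref{clxx}). For the reverse inclusion, every $\mathfrak{q}\in Z$ contains $\phi(\mathcal{D}_X)$, and therefore $\phi(\mathcal{D}_X)\subseteq\mathcal{D}_Z=\bigcap_{\mathfrak{q}\in Z}\mathfrak{q}$. Consequently, if $\mathfrak{q}'\in\mathrm{Prim}(T)$ satisfies $\mathfrak{q}'\supseteq\mathcal{D}_Z$, then $\mathfrak{q}'\supseteq\phi(\mathcal{D}_X)$, which places $\mathfrak{q}'\in Z$. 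Hence $\mathcal{C}(Z)\subseteq Z$, so $Z$ is closed and $\phi_*$ is continuous.

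I expect the main obstacle to lie not in this short topological step but in the tacit well-definedness of $\phi_*$, that is, in verifying that $\phi\inv(\mathfrak{q})$ is itself a primitive ideal of $S$ whenever $\mathfrak{q}\in\mathrm{Prim}(T)$. The natural approach is to take a simple $T$-module $M$ with $\mathrm{Ann}_T(M)=\mathfrak{q}$, endow it with an $S$-module structure via $s\cdot m:=\phi(s)m$, and record that $\mathrm{Ann}_S(M)=\phi\inv(\mathrm{Ann}_T(M))=\phi\inv(\mathfrak{q})$. Simplicity of $M$ as an $S$-module is automatic when $\phi$ is surjective, since $S$-submodules then coincide with $T$-submodules; in the general case an additional hypothesis is needed, but once this is granted the preimage computation above yields continuity immediately.
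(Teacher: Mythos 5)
Your argument is correct and follows essentially the same route as the paper: first address well-definedness of $\phi_*$ by restriction of scalars along $\phi$, then compute $\phi_*\inv(\mathcal{C}(X))$ as the set of primitive ideals of $T$ containing $\phi(\mathcal{D}_X)$ and observe that such a ``hull'' is closed. Your explicit verification that $Z=\mathcal{C}(Z)$ is slightly more careful than the paper, which simply identifies the preimage with $\mathcal{C}(\langle \phi(\mathfrak{a})\rangle)$ and leaves the closedness of such sets implicit. Your caveat about well-definedness is also well taken: the paper's own proof invokes ``change of rings'' to conclude that $\phi\inv(\mathfrak{q})$ is the annihilator of a simple $S$-module, but simplicity of $M$ over $S$ (and indeed the condition $S\!M\neq 0$) is only automatic when $\phi$ is surjective, so the hypothesis you flag is genuinely needed and is silently assumed in the paper.
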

  
\begin{proof}
To show $\phi_*$ is continuous, we first show that $f\inv(\mathfrak{p})\in \mathrm{Prim}(S),$ whenever $\mathfrak{p}\in \mathrm{Prim}(T)$. Note that $\phi\inv(\mathfrak{p})$ is an ideal of $S$ and a union of $\mathrm{ker}\phi$-classes (see \cite[Proposition 3.4]{G01}. Suppose $\mathfrak{p}=\mathrm{Ann}_{T}(M)$ for some simple $T$-module. Then by the ``change of rings'' property of modules, $\phi\inv(\mathfrak{p})$ is the annihilator of the simple $T$-module $M$ obtained by defining $sm:=\phi(s)m$. Therefore $f\inv(\mathfrak{p})\in \mathrm{Prim}(S)$. Now consider a closed subset $\mathcal{C}(\mathfrak{a})$ of  $\mathrm{Prim}(S).$ Then for any $\mathfrak{q}\in \mathrm{Prim}(T),$ we have:
\begin{align*}
\mathfrak{q}\in \phi_*\inv (\mathcal{C}(\mathfrak{a}))\Leftrightarrow \phi\inv(\mathfrak{q})\in \mathcal{C}(\mathfrak{a})\Leftrightarrow \mathfrak{a}\subseteq \phi\inv(\mathfrak{q})\Leftrightarrow \mathfrak{q}\in\mathcal{C}(\langle \phi(\mathfrak{a})\rangle),
\end{align*}  
and this proves the desired continuity of $\phi_*$.
\end{proof}


\end{document}